\documentclass[times]{elsarticle}

\usepackage[colorlinks=true, pdfstartview=FitV, linkcolor=blue, pagebackref, citecolor=blue, urlcolor=blue]{hyperref}

\makeatletter
\providecommand{\doi}[1]{%
  \begingroup
    \let\bibinfo\@secondoftwo
    \urlstyle{rm}%
    \href{http://dx.doi.org/#1}{%
      doi:\discretionary{}{}{}%
      \nolinkurl{#1}%
    }%
  \endgroup
}
\makeatother
\usepackage{amssymb,amsmath}

%

%
%

\newcommand{\darkrad}{0.17}
\newcommand{\lrad}{0.4}


\usepackage{amsthm}
\numberwithin{equation}{section}

\newtheorem{theorem}[equation]{Theorem}

\newtheorem{prop}[equation]{Proposition}

\newtheorem{lemma}[equation]{Lemma}

\theoremstyle{remark}
\newtheorem*{rmk*}{Remark}

\newtheorem*{rmks*}{Remarks}
\newtheorem{rmks}[equation]{Remarks}

\theoremstyle{definition}

\DeclareMathOperator{\Ext}{Ext}

\DeclareMathOperator{\Hom}{Hom}
\DeclareMathOperator{\Lie}{Lie}

\DeclareMathOperator{\Aut}{Aut}

\DeclareMathOperator{\SO}{SO}
\DeclareMathOperator{\SU}{SU}
\DeclareMathOperator{\Sp}{Sp}
\DeclareMathOperator{\Spin}{Spin}
\DeclareMathOperator{\SL}{SL}
\DeclareMathOperator{\U}{U}
\DeclareMathOperator{\PU}{PU}
\DeclareMathOperator{\GL}{GL}
\DeclareMathOperator{\PGL}{PGL}

\DeclareMathOperator{\Res}{Res}

\DeclareMathOperator{\Sym}{Sym}

\DeclareMathOperator{\ad}{ad}

\newcommand{\Gm}{\mathbb{G}_{\mathrm{m}}}

\newcommand{\C}{{\mathbb C}}

\newcommand{\Z}{{\mathbb Z}}

\newcommand{\dE}{^2\!E_6}

\newcommand{\tD}{^3\!D_4}
\newcommand{\sD}{^6\!D_4}

\newcommand{\g}{\mathfrak{g}}

\newcommand{\so}{\mathfrak{so}}
\newcommand{\h}{\mathfrak{h}}
\newcommand{\m}{\mathfrak{m}}
\newcommand{\spin}{\mathfrak{spin}}
\newcommand{\su}{\mathfrak{su}}

\newcommand{\eps}{\epsilon}

\renewcommand{\j}{\mathfrak{j}}

\newcommand{\fu}{\mathfrak{u}}
\renewcommand{\sl}{\mathfrak{sl}}

\newcommand{\kbar}{\overline{k}}

\newcommand{\qform}[1]{\langle #1 \rangle}
\newcommand{\bilform}{\qform{\, ,}}

\title{Minuscule embeddings\tnoteref{t1}}
\tnotetext[t1]{In memory of T.A.~Springer}
\author[1]{Benedict H. Gross}
\ead[1]{gross@math.harvard.edu} 
\address[1]{Department of Mathematics, University of California, San Diego, 9500 Gilman Dr., La Jolla, CA 92093}

\author[2]{Skip Garibaldi}
\ead[2]{skip@garibaldibros.com}
\address[2]{IDA Center for Communications Research, La Jolla, 4320 Westerra Ct, San Diego, CA 92121}

\begin{document}

\begin{abstract}
We study embeddings $J \rightarrow G$ of simple linear algebraic groups with the following property: the simple components of the $J$ module $\Lie(G)/\Lie(J)$ are all minuscule representations of $J$. One family of examples occurs when the group $G$ has roots of two different lengths and $J$ is the subgroup generated by the long roots. We classify all such embeddings when $J = \SL_2$ and $J = \SL_3$, show how each embedding implies the existence of exceptional algebraic structures on the graded components of $\Lie(G)$, and relate properties of those structures to the existence of various twisted forms of $G$ with certain relative root systems.
\end{abstract}

\begin{keyword}
\MSC[2020]{Primary 20G10; Secondary 17B70 and 20G41}
\end{keyword}

\maketitle

\setcounter{tocdepth}{1}
\tableofcontents
\section{Introduction}

\setlength{\unitlength}{.5cm}

\newsavebox{\Esev}
\savebox{\Esev}(7,1.9){\begin{picture}(7,1.9)
    \multiput(1,0.7)(1,0){6}{\circle*{\darkrad}}
     \put(3,1.45){\circle*{\darkrad}}
    \put(1,.7){\line(1,0){5}}
    \put(3,1.45){\line(0,-1){0.75}}
\end{picture}}

\newsavebox{\Evi}
\savebox{\Evi}(5,2){\begin{picture}(5,2)
    \multiput(0.5,0.5)(1,0){5}{\circle*{\darkrad}}
    \put(2.5,1.5){\circle*{\darkrad}}

    \put(0.5,0.5){\line(1,0){4}}
    \put(2.5,1.5){\line(0,-1){1}}\end{picture}}
        
\newsavebox{\Eviii}
\savebox{\Eviii}(8,1.9){\begin{picture}(8,1.9)
    \multiput(1,0.7)(1,0){7}{\circle*{\darkrad}}
     \put(3,1.45){\circle*{\darkrad}}
    \put(1,.7){\line(1,0){6}}
    \put(3,1.45){\line(0,-1){0.75}}
\end{picture}}

\newsavebox{\dEpic}
\savebox{\dEpic}(3.5,1){\begin{picture}(3.5, 1)
    \multiput(1.75,0.75)(0.75,0){2}{\circle*{\darkrad}}
    \multiput(1.75,0.25)(0.75,0){2}{\circle*{\darkrad}}
    \multiput(0.25,0.5)(0.75,0){2}{\circle*{\darkrad}}
    
    \put(0.25, 0.5){\line(1,0){0.75}}
    \put(1.75,0.75){\line(1,0){0.75}}
    \put(1.75, 0.25){\line(1,0){0.75}}
    
    \put(1.75,0.5){\oval(1.5,0.5)[l]}\end{picture}}
\newsavebox{\iEpic}
\savebox{\iEpic}(2.5,1){\begin{picture}(2.5,1)
    \multiput(0.25,0.25)(0.5,0){5}{\circle*{\darkrad}}
    \put(1.25,0.75){\circle*{\darkrad}}

    \put(0.25,0.25){\line(1,0){2}}
    \put(1.25,0.75){\line(0,-1){0.5}}
    
    \put(0.25,0.25){\circle{\lrad}}
    \put(2.25,0.25){\circle{\lrad}}
\end{picture}}

\newsavebox{\Fiv}
\savebox{\Fiv}(4,1){\begin{picture}(4,1)\multiput(0.5,0.5)(1,0){2}{\circle*{\darkrad}}
\multiput(2.5,0.5)(1,0){2}{\circle*{\darkrad}}
    \put(1.5,0.45){\line(1,0){1}}
    \put(1.5,0.55){\line(1,0){1}}
    \put(0.5,0.5){\line(1,0){1}}
    \put(2.5,0.5){\line(1,0){1}}
        \put(1.75, .35){\makebox(0.2,0.3)[s]{$>$}}\end{picture}}

\newsavebox{\Gii}
\savebox{\Gii}(4,1){\begin{picture}(4,1)
\multiput(0.5,0.5)(1.5,0){2}{\circle*{\darkrad}}
\put(0.5,0.44){\line(1,0){1.5}}
\put(0.5,0.5){\line(1,0){1.5}}
\put(0.5,0.56){\line(1,0){1.5}}
\put(1,0.35){{\small\mbox{$>$}}}\end{picture}}

In this paper we study embeddings $J \to G$ of simple linear algebraic groups over a field such that the simple factors of the composition series of the $J$-module $\Lie(G) / \Lie(J)$ are all minuscule representations of $J$.  We call such embeddings \emph{minuscule}.  

Recall that minuscule representations of a split, simple group $J$ over a field of characteristic zero are the irreducible representations whose weights for a maximal split torus lie in a single orbit for the Weyl group. (Unlike Bourbaki \cite [Ch VI, \S1, Ex 24] {Bou:g4} or \cite[Ch VIII, \S7.3] {Bou:g7}, we consider the trivial representation to be minuscule.) Over a general field, they are the irreducible representations whose highest weight is minimal for the partial ordering on the set of dominant weights (given by $\lambda \ge \mu$ if $\lambda - \mu$ is a sum of positive roots). Alternatively, they are the irreducible representations whose weights $\lambda$  satisfy $\langle \lambda, \alpha^{\vee} \rangle \in \{0,1,-1\}$ for all roots $\alpha$ \cite[Ch VIII, \S7.3, Prop 6]{Bou:g7}. Each minuscule representation is determined by its central character, and the number of minuscule representations is equal to the order of the finite center $Z(J)$. For the group $J = \SL_2$ only the trivial and the standard two dimensional representation are minuscule.

Minuscule embeddings arise naturally in several different contexts:
\begin{itemize}
\item When $G$ is a split group which has two root lengths and $J$ is the subgroup generated by the long roots.  Indeed, for a short root $\alpha$ and a long root $\beta$, the pairing $\qform{\alpha, \beta^\vee}$ is in $\{ 0, \pm 1 \}$.  See section \ref{two.lengths}.
\item When $J$ is the $A_1$ subgroup generated by the highest root of $G$ \cite[Ch VI, \S1]{Bou:g4} and its negative, as in \cite[Prop.~3.3]{GrWall}.  This is up to conjugacy the unique $A_1$ subgroup of $G$ with Dynkin index 1 \cite[Th.~2.4]{Dynk:ssub}.  See section \ref{A1.basic}.
\item Several rows of the Magic Triangle in \cite{DeligneGross} can be viewed in terms of minuscule embeddings, where $J = \SL_2$ or $\SL_3$ and 
$G$ is exceptional of type $E$, $F$, or $G$.  See sections \ref{A1.basic} and \ref{A2.basic}.
\item When $G$ has a relative root system with two root lengths such that the long roots have multiplicity 1 in $\Lie(G)$ and form the root system of $J$, see sections \ref{SL2.eg}, \ref{two.lengths}, \ref{SL3.eg}, and \ref{D4.eg}. 
\end{itemize}


This paper includes a classification of the minuscule embeddings $J \rightarrow G$ over $k$, for $J = \SL_2$, $\SL_3$, and $\Spin_{4,4}$ (the split simply-connected group of type $D_4$).  We will assume, throughout this paper, that the characteristic of $k$ is not equal to $2$ or $3$, so that in particular Proposition \ref{decomp.prop} applies. Much of our work involves the study of the centralizer $Z_G(J)$ and its representations $W_\chi$, which are defined in the next section. These representations have exceptional invariant tensors, which were studied in detail by T.A. Springer \cite{Sp:jord}, \cite{Sp:ex}, \cite[\S38]{KMRT}, and it is a pleasure to dedicate this paper to his memory. We leverage knowledge of those tensor structures to give criteria for the existence of algebraic groups with relative root systems of type $BC_1$, $G_2$, and $F_4$. 

Regarding related work: After we had written this paper, we learned from Alberto Elduque of Vinberg's paper \cite{Vinberg:na}, where what we call a minuscule embedding $\SL_3 \to G$ is studied as a ``short $\SL_3$-structure on $\g$''.  Sections \ref{A2.basic} and \ref{SL3.invt} have substantial overlap with \cite{Vinberg:na}; one could view this material as a perspective on Springer's monograph \cite{Sp:jord}.  In another direction, 
the recent paper \cite{AF:shortP} begins with an isotropic semisimple group $G$ and also deduces algebraic structures on some  subspaces of $\Lie(G)$.  In yet another direction, the papers \cite{BermanMoody} and \cite{BenkartZelmanov} study embeddings $J \to G$ such that the nonzero weights of the representation $\g/\j$ of $J$ are all roots of $J$.

\section{Generalities}

A minuscule embedding $J \rightarrow G$  gives a grading of the Lie algebra $\g$ of $G$ over $k$, where the summands are indexed by the characters of the center $Z(J)$ of $J$. Since the center is a finite group scheme of multiplicative type over $k$, its Cartier dual $C = \Hom(Z(J),\mathbb G_m)$ is a finite \'etale group scheme. For each character $\chi$ in $C$ we let $V_\chi$ be the minuscule representation of $J$ whose weights restrict to $\chi$ on $Z(J)$.  If $\chi \ne \chi'$, then the difference of the highest weights of $V_\chi$ and $V_{\chi'}$ is not in the root lattice, so $\Ext^1_J(V_\chi, V_{\chi'}) = 0$ \cite[II.2.14]{Jantzen} and we have a direct sum decomposition
\begin{equation} \label{min.decomp}
\g/\j = \bigoplus_{\chi \in C} V_\chi \otimes W_\chi
\end{equation}
as representations of $J$. We note that each vector space $W_\chi$ is a linear representation of the centralizer $Z_G(J)$ of $J$ in $G$. For the minuscule embeddings which correspond to the long root subgroups of a split adjoint group with two root lengths, the centralizer $Z_G(J)$ is equal to the finite center $Z(J)$, $W_0 = 0$ and for each nonzero $\chi$, $W_\chi = \chi$.

We can make this decomposition more uniform by considering the Vinberg grading of $\g$ given by the action of the finite group scheme $Z(J)$. For nonzero $\chi$, the component $\g(\chi)$ is the representation $V_\chi \otimes W_\chi$ of the centralizer $G(0) = J.Z_G(J)$. For $\chi = 0$ the component $\g(0)$ is the Lie algebra of $G(0)$.

Let $H$ be the connected component of the centralizer $Z_G(J)$ and let $\h = \Lie(H)$. Since the intersection of $J$ and $Z_G(J)$ in $G(0)$ is the finite center $Z(J)$, if we assume that the characteristic of $k$ does not divide the order of $Z(J)$, the Lie algebra $\g(0) = \j + \h$ decomposes as a direct sum. In summary:

\begin{prop} \label{decomp.prop}
Assume that $J \rightarrow G$ is a minuscule embedding and that the finite group scheme $Z(J)$ has order prime to the characteristic of $k$.  Then we have the decomposition
\begin{equation} \label{sum.master}
\g =( \j \otimes 1) \oplus  (1 \otimes \h)  \oplus  \bigoplus_{0 \ne \chi \in C} (V_\chi \otimes W_\chi)
\end{equation}
as representations of $J \times Z_G(J)$.
\end{prop}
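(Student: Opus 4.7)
The plan is to combine three ingredients, two of which are already in place: the $J$-module decomposition \eqref{min.decomp} of $\g/\j$, the Vinberg grading of $\g$ produced by the $Z(J)$-action that is introduced in the paragraph preceding the proposition, and the almost-direct product description $G(0) = J \cdot Z_G(J)$. The summands of \eqref{sum.master} corresponding to nonzero $\chi$ will drop out almost immediately; the substantive content of the proposition is the identification of the zero-weight piece $\g(0)$ with $\j \oplus \h$.

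First I would recall that $Z(J)$ is a finite group scheme of multiplicative type, hence linearly reductive in every characteristic, so its adjoint action on $\g$ produces a weight decomposition $\g = \bigoplus_{\chi \in C} \g(\chi)$ that is automatically stable under both $J$ and $Z_G(J)$. Since $Z(J)$ is central in $J$, its adjoint action on $\j$ is trivial, so $\j \subseteq \g(0)$. For each nonzero $\chi$ the projection $\g \to \g/\j$ therefore restricts to an isomorphism of $\g(\chi)$ with the $\chi$-weight space of $\g/\j$, which by \eqref{min.decomp} equals $V_\chi \otimes W_\chi$ as a $J \times Z_G(J)$-module.

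The remaining task is to show that $\g(0) = \j \oplus \h$ as $J \times Z_G(J)$-modules. The multiplication map $\mu \colon J \times Z_G(J) \to G$ has image $G(0)$ and scheme-theoretic kernel equal to the antidiagonal copy of $J \cap Z_G(J) = Z(J)$. The characteristic hypothesis ensures that this finite multiplicative-type group scheme is étale and therefore has trivial Lie algebra. Consequently the differential $d\mu \colon \j \oplus \h \to \g(0)$ is surjective with zero kernel, hence a $J \times Z_G(J)$-equivariant isomorphism, and assembling this with the previous paragraph yields \eqref{sum.master}.

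The main obstacle is precisely the role of the hypothesis on $\car k$ in the last step: in bad characteristic a multiplicative-type group scheme such as $\mu_p$ has a one-dimensional Lie algebra, so without the hypothesis the intersection $\j \cap \h$ inside $\g(0)$ can be nonzero and the summands of \eqref{sum.master} need not be linearly independent. Beyond that single point the argument is essentially formal given the decomposition \eqref{min.decomp} already established and the structure of $G(0)$ stated just before the proposition.
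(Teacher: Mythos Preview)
Your proposal is correct and follows essentially the same approach as the paper: the paper likewise uses the Vinberg grading by $Z(J)$ to isolate the pieces $\g(\chi) = V_\chi \otimes W_\chi$ for $\chi \ne 0$, and then observes that because $J \cap Z_G(J) = Z(J)$ has order prime to $\car k$, the Lie algebra $\g(0)$ splits as $\j \oplus \h$. You have simply made explicit the étaleness argument for that last step.
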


\section{$A_1$ case: Minuscule embeddings of $\SL_2$} \label{A1.basic}

Let $G$ be a split, simple group of adjoint type over $k$, of rank at least two. In this section, we will construct a minuscule embedding $\SL_2 \rightarrow G$ (generalizing the one studied over $\C$ in \cite{GrWall}), and will show that all such embeddings are conjugate.

The construction of a minuscule embedding of $\SL_2$ is given as follows. Let $T \subset B \subset G$ be a maximal torus contained in a Borel subgroup of $G$, and let $\beta$ be the highest root, which is the highest weight of $T$ on the adjoint representation $\g$. The 1-dimensional weight spaces $\g_{\beta}$ and $\g_{-\beta}$ generate a 3-dimensional Lie subalgebra of $\g$, which is isomorphic to $\sl_2$. A fixed embedding of $\SL_2$ sends the standard generators $E$ and $F$ of $\sl_2$ to compatible basis elements of $\g_{\beta}$ and of $\g_{-\beta}$ respectively. This embedding is minuscule. Indeed, a maximal torus $S$ in $\SL_2$ is the image of the co-root $\beta^{\vee}$, and for any positive root $\alpha$ which is not equal to $\beta$ we have $\langle \beta^{\vee},\alpha \rangle = 0$ or $\langle \beta^{\vee},\alpha \rangle = 1$. Hence the only representations of $\SL_2$ which occur in the quotient $\g/\sl_2$ are the standard and the trivial representation. 

\begin{theorem}
Every minuscule embedding $\SL_2 \rightarrow G$ is conjugate to the embedding given above.
\end{theorem}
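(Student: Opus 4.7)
The plan is to translate $\phi$ into an $\sl_2$-triple in $\g$, use the minuscule hypothesis to show the nilpotent element of this triple must be (conjugate to) a highest-root vector, and then invoke Jacobson--Morozov/Kostant uniqueness to normalize the entire triple to the standard one.

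Setting $(e, h, f) = (d\phi(E), d\phi(H), d\phi(F))$, Proposition \ref{decomp.prop} and the minuscule hypothesis imply that $\ad(h)$ acts on $\g$ with eigenvalues in $\{-2, -1, 0, 1, 2\}$, the extremal eigenvalues $\pm 2$ each having multiplicity one (accounted for by $e$ and $f$). After $G$-conjugation $h$ lies in a fixed maximal torus $T$, and after a further Weyl-group conjugation $h$ is dominant. The $+2$-eigenspace of $\ad(h)$ is then a single root space $\g_\gamma$ for a unique positive root $\gamma$.

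A dominance argument now identifies $\gamma = \beta$, the highest root. Indeed, $\beta - \gamma$ is a non-negative integral combination of simple roots (as $\beta$ is the highest root), so $\langle \beta, h \rangle \geq \langle \gamma, h \rangle = 2$; combined with $\langle \beta, h \rangle \leq 2$ and the uniqueness of the weight-$2$ root, this forces $\gamma = \beta$. Hence $e \in \g_\beta \setminus \{0\}$, and since $T$ acts on the one-dimensional space $\g_\beta$ via the surjective character $\beta \colon T \to \Gm$, a further conjugation by $T$ normalizes $e$ to a fixed highest-root vector $E_\beta$. Uniqueness in Jacobson--Morozov then allows a conjugation by $Z_G(E_\beta)$ that brings the whole triple to the standard one $(E_\beta, \beta^\vee, F_\beta)$.

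Finally, the image $\phi(\SL_2)$ is generated by the two one-parameter unipotent subgroups whose Lie algebras are $k \cdot E_\beta = \g_\beta$ and $k \cdot F_\beta = \g_{-\beta}$, namely the root subgroups $U_{\pm\beta}$, so it coincides with the highest-root $\SL_2$ subgroup constructed before the theorem. The main work is the dominance step identifying $\gamma$ as the highest root; the other steps are standard Lie-theoretic bookkeeping, made available by the characteristic assumption $\car k \neq 2, 3$.
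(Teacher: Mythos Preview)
Your argument is correct and follows essentially the same route as the paper's: conjugate the cocharacter (your $h$) into the dominant chamber of the fixed maximal torus $T$, use dominance to identify the unique weight-$2$ root as the highest root $\beta$, and then use $T \subset Z_G(S)$ (with the adjoint hypothesis making $\beta \colon T \to \Gm$ surjective) to normalize the image of $E$ in $\g_\beta$. The only difference is that the paper finishes directly---once $\nu = \beta^\vee$ and $E \mapsto E_\beta$ are fixed, $F$ is forced---whereas you invoke Jacobson--Morozov/Kostant uniqueness; this is harmless here because for a highest-root vector that uniqueness is immediate (one sees $h = [e,f] \in k\,h_\beta$ and $\beta(h)=2$), but be aware that the general Kostant theorem requires good characteristic (so also $p \ne 5$ for $E_8$), not merely $\car k \ne 2,3$.
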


\begin{proof}
If we have an embedding of $\SL_2$, then we may conjugate it by an element of $G$ so that the restriction to a maximal torus $S$ of $\SL_2$ lies in $T$, and is a dominant co-character $\nu$ with respect to $B$. Since the embedding is minuscule, for all positive roots $\alpha$, we have $\langle \nu,\alpha \rangle = 0,1,2$, and there is a unique positive root such that $\langle \nu,\alpha \rangle = 2$. Since the multiplicity of each simple root in $\alpha$ is less than or equal to its multiplicity in the highest root $\beta$, we must have $\langle \nu,\beta \rangle = 2$. Then the sub Lie algebra $\sl_2$ is given by $\g_{-\beta} + \Lie(S) + \g_{\beta}$ and $\nu = \beta^{\vee}$ is the associated co-root. We have therefore conjugated any embedding to have the same image as our standard embedding with equality on the maximal torus $S$. To finish the proof, we observe that the centralizer of $S$ acts transitively on the basis elements in the one dimensional $k$-vector space $\g_{\beta}$. Indeed, the centralizer of $S$ contains the maximal torus $T$. Since $G$ is adjoint and the root $\beta$ can be extended to give a root basis of the character group of $T$, there is a co-character $\mu:\mathbb G_m \rightarrow T$ which satisfies $\langle \mu,\beta \rangle = 1$.
\end{proof}

That is, every minuscule embedding $\SL_2 \rightarrow G$ is up to conjugacy the unique $A_1$ subgroup of $G$ with Dynkin index 1 \cite[Th.~2.4]{Dynk:ssub}.

For a fixed minuscule embedding $\SL_2 \rightarrow G$, we wish to determine the centralizer $H$ in $G$ and the full stabilizer $M$ in $\Aut(G)$. The calculation of the centralizer $H$ follows the argument in \cite[\S2]{GrWall}, but to determine the structure of the full stabilizer (which has connected component $H$) we need to consider the action of outer automorphisms of $G$. Fix a pinning of the simple root spaces with respect to $B$ and let $\Sigma$ be the group of all pinned automorphisms of $G$. This is a finite group, which is trivial unless $G$ is of type $A_n$ with $n \geq 2$, $D_n$ with $n \geq 4$, or $E_6$. In all but one of these cases, the group $\Sigma$ has order $2$. When $G$ has type $D_4$, the group $\Sigma$ has order $6$ and is isomorphic to the permutation group on $3$ letters.  The group $\Sigma$ permutes the simple roots, via the automorphisms of the Dynkin diagram. Since the multiplicity of a simple root in the highest root $\beta$ depends only on its orbit under $\Sigma$, the group $\Sigma$ fixes the highest root. Hence $\Sigma$ acts on the highest root space $\g_{\beta}$. In all cases but type $A_{2n}$, the group $\Sigma$ acts trivially on $\g_{\beta}$, whereas in the case of $A_{2n}$ it acts by the non-trivial character. This follows from the following more general result.

\begin{lemma} \label{A2n.lem}
Let $\Sigma$ be the group of all pinned automorphisms of $G$, and let $\alpha$ be a root fixed by $\Sigma$. Then $\Sigma$ acts trivially on the root space $\g_{\alpha}$, except in the case when $G$ has type $A_{2n}$, where $\Sigma$ acts on $\g_{\alpha}$ by the sign character.
\end{lemma}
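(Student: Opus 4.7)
The plan is to compare the one-dimensional space $\g_\alpha$ with the root spaces of the fixed subalgebra $\g^\Sigma$. Since $\Sigma$ preserves the pinning and has order prime to $\car k$ (because $\car k \ne 2,3$), $\g^\Sigma$ is the Lie algebra of the connected reductive subgroup $G^\Sigma$, with Cartan $\mathfrak{t}^\Sigma = \mathfrak{t} \cap \g^\Sigma$. Because $\g_\alpha$ is $\Sigma$-stable and one-dimensional, $\Sigma$ acts on it through a single character, and that character is trivial if and only if $\g_\alpha \cap \g^\Sigma \ne 0$, equivalently if and only if $\alpha|_{\mathfrak{t}^\Sigma}$ is a root of $\g^\Sigma$.

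For every type in which $\Sigma$ is non-trivial other than $A_{2n}$ --- namely $A_{2n-1}$, $D_n$ ($n \ge 4$), or $E_6$ with $\Sigma = \Z/2$, and $D_4$ with $\Sigma = S_3$ --- at least one simple root $\alpha_0$ is $\Sigma$-fixed, and then $e_{\alpha_0}$ lies in $\g^\Sigma$ by construction. The standard folding identifies $\g^\Sigma$ as simple of type $C_n$, $B_{n-1}$, $F_4$, or $G_2$ respectively, with its long roots matched bijectively to the $\Sigma$-fixed roots of $G$; an induction on height (using that brackets of $\Sigma$-invariant vectors are $\Sigma$-invariant) then shows that every $\Sigma$-fixed root of $G$ carries a nonzero $\Sigma$-invariant root vector. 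Hence $\Sigma$ acts trivially on $\g_\alpha$ in these types.

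The type $A_{2n}$ is genuinely anomalous because $\sigma$ swaps $\alpha_i \leftrightarrow \alpha_{2n+1-i}$ and fixes no simple root. Every $\sigma$-fixed positive root has the form $\alpha = \alpha_i + \alpha_{i+1} + \cdots + \alpha_{2n+1-i}$, and I would split it as $\alpha = \beta + \sigma(\beta)$ with $\beta := \alpha_i + \alpha_{i+1} + \cdots + \alpha_n$ a root satisfying $\sigma(\beta) \ne \beta$. Choosing $0 \ne e_\beta \in \g_\beta$ and setting $e_{\sigma(\beta)} := \sigma(e_\beta)$, one has $[e_\beta, e_{\sigma(\beta)}]$ a nonzero element of $\g_\alpha$ (since $\beta + \sigma(\beta) = \alpha$ is a root), and
\[
\sigma\bigl([e_\beta, e_{\sigma(\beta)}]\bigr) = [e_{\sigma(\beta)}, e_\beta] = -[e_\beta, e_{\sigma(\beta)}],
\]
so $\sigma$ acts by $-1$ on $\g_\alpha$, as required. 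The main obstacle is the identification, in the non-$A_{2n}$ types, of $\g^\Sigma$ together with the bijection between its long roots and the $\Sigma$-fixed roots of $G$; I would handle this either by citing the standard theory of folded root systems, or in each type via an explicit matrix realization of $\g^\Sigma \subset \g$.
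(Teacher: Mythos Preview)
Your argument is correct and takes a genuinely different route from the paper. The paper proceeds by a global trace computation: it evaluates $\Tr(\sigma \mid \g)$ first from the known dimension of the fixed subalgebra $\g^\sigma$ (namely $\mathfrak{sp}_{2n}$, $\so_{2n+1}$, $\so_{2n-1}$, $\mathfrak f_4$, or $\mathfrak g_2$), and second from the Cartan decomposition, where only $\mathfrak t$ and the $\sigma$-fixed root spaces contribute; equating the two forces the sign on every fixed root space at once. Your approach is instead local and constructive: in the non-$A_{2n}$ types you identify $\g^\Sigma$ via folding and observe that each $\Sigma$-fixed root of $G$ is a long root of $\g^\Sigma$, whose one-dimensional root space then sits inside $\g_\alpha$; in type $A_{2n}$ you exhibit the sign directly from the bracket $[e_\beta, e_{\sigma(\beta)}]$. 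Your method makes the $A_{2n}$ anomaly transparent --- the absence of a fixed simple root forces every fixed root to split as $\beta + \sigma(\beta)$, and the antisymmetry of the bracket yields the $-1$ --- at the cost of invoking the folding correspondence in the remaining types. The paper's trace argument is more uniform and needs only $\dim \g^\sigma$ together with a count of fixed roots, but is less explicit about the source of the sign in $A_{2n}$. One small caution: your ``induction on height'' is really an induction inside the root system of $\g^\Sigma$, not of $G$, since a $\Sigma$-fixed root of $G$ generally does not decompose as a sum of two $\Sigma$-fixed roots. What makes the argument go through is that the long-root space of $\g^\Sigma$ lands entirely in the single $\g_\alpha$, i.e.\ that no other root of $G$ has the same restriction to $\mathfrak t^\Sigma$ --- precisely the folding fact you flag at the end as the main point needing justification.
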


\begin{proof}
We compute the trace of each non-trivial element $\sigma$ in $\Sigma$ in two ways. The first uses the grading of $\g$ into eigenspaces for $\sigma$. When $\sigma$ has order two, it suffices to determine the dimension of the fixed algebra. For $\g = \sl_{2n}$ the fixed algebra is $\mathfrak {sp}_{2n}$ and the trace of $\sigma$ is $2n+1$. For $\g = \sl_{2n+1}$ the fixed algebra is $\so_{2n+1}$ and the trace of $\sigma$ is $-2n$. For $\g = \so_{2n}$ the fixed algebra is $\so_{2n-1}$ and the trace of $\sigma$ is $2n^2 - 5n + 2$. Finally, for $\g = \mathfrak e_6$ the fixed algebra is $\mathfrak f_4$ and the trace of $\sigma$ is $26$. For $\sigma$ of order $3$ acting on $\so_8$, the trace of $\sigma$ is $7$.  (The fixed algebras are determined in \cite[Exercise VIII.5.13]{Bou:g7}, for example.)

We can also compute the trace of $\sigma$ using the Cartan decomposition $\g = \mathfrak{t} + \sum_{\alpha} \g_{\alpha}$. The trace on $\mathfrak {t}$ can be computed by comparing the rank with the rank of the fixed algebra. The only root spaces that contribute to the trace are those fixed by $\sigma$, and a count of the fixed roots shows that the trace of $\sigma$ on each of these spaces must be $+1$, except in the case of $A_{2n}$, when it must be $-1$. 
\end{proof}

The full automorphism group of $G$ is isomorphic to the semi-direct product $G.\Sigma$. This acts transitively on the set of minuscule embeddings $\SL_2 \rightarrow G$, and the stabilizer $M$ of our fixed embedding is an extension 
\begin{equation} \label{etale.seq}
1 \rightarrow H \rightarrow M \rightarrow \Sigma \rightarrow 1.
\end{equation}
When $G$ is not of type $A_{2n}$ this extension is split. Indeed, the group $\Sigma$ fixes the minuscule embedding described above. We shall see that it is not split for type $A_{2n}$. \\

\renewcommand*{\arraystretch}{1.4}
\begin{table}[hbt]
\[
\begin{array}{ccc}
G&M&W \\ \hline
\PGL_{n+2}&\GL_n.2&V_n + V_n^{\vee} \\
\SO_{2n+5}& \SL_2 \times \SO_{2n+1}&V_2 \otimes V_{2n+1} \\
\Sp_{2n+2}/\mu_2&\Sp_{2n}& V_{2n} \\
\SO_{2n+4}/\mu_2& (\SL_2 \times O_{2n})/\Delta \mu_2&V_2 \otimes V_{2n}  \\
\SO_8/\mu_2 & (\SL_2^3/{\prod \mu_2 = 1}).S_3 & V_2 \otimes V_2 \otimes V_2\\
G_2& \SL_2& \Sym^3(V_2) = V_4 \\
F_4& \Sp_6& \wedge^3(V_6)_0 = V_{14} \\
E_6/\mu_3  & (\SL_6/\mu_3).2& \wedge^3(V_6) = V_{20}  \\
E_7/\mu_2  & \Spin_{12}/\mu_2& V_{32} \text{\ (half-spin)} \\
E_8 & E_7 & V_{56} \text{\ (minuscule)} 
\end{array}
\]
\caption{For a minuscule $\SL_2$ in $G$, the group $M$ and its representation $W$} \label{BC1.table}
\end{table}

 The action of $\SL_2 \times M$ on $\g$ decomposes as in \eqref{sum.master} as a direct sum of representations
\begin{equation} \label{sum.sl2}
\g = \sl_2 \otimes 1 +  1 \otimes \m + V_2 \otimes W
\end{equation}
where $\m$ is the adjoint representation of (the disconnected reductive group) $M$. The center of $M$ is isomorphic to $\mu_2$ and the map $(\SL_2 \times M) \rightarrow \Aut(G)$ has kernel the diagonally embedded $\mu_2$.\\

Table \ref{BC1.table} lists the groups $M$  of automorphisms of $G$ which fix the minuscule embedding and their irreducible symplectic representations $W$. The connected component of $M$ is the centralizer $H$ of the embedding in $G$. For $G = \PGL_{n+2}$, $H = \GL_n$ is the Levi subgroup of a Siegel parabolic in $\Sp(W) = \Sp_{2n}$ and $M$ is its normalizer. This is a semi-direct product when $n$ is even, by Lemma \ref{A2n.lem}. When $n$ is odd, the exact sequence $1 \rightarrow H \rightarrow M \rightarrow \Z/2\Z \rightarrow 1$ is not split -- the smallest order of an element in the normalizer which does not lie in $H$ is $4$.

\section{$A_1$ case: $M$-Invariant tensors on $W$} \label{SL2.invt}

Fix a minuscule embedding $\SL_2 \rightarrow G$ associated to the highest root $\beta$.  The co-character $\beta^{\vee}$ gives a $5$-term grading on $\g$:
\begin{equation} \label{5term}
\g = \g_{-2} \oplus \g_{-1} \oplus \g_0 \oplus \g_1 \oplus \g_2.
\end{equation}
Each summand is a representation of $M = H.\Sigma$, which fixes the minuscule embedding. The subalgebra $\g_0$ is the Lie algebra of the reductive subgroup $H.S$ of $G$, and the eigencomponents $\g_2$ and $\g_{-2}$ are the highest and lowest weight spaces for the torus. Both have dimension $1$ with a chosen basis element (the images of the elements $E$ and $F$ in $\sl_2$) and give the trivial representation of $M$.  Let $W = \g_1$. Then $W$ is an irreducible representation of $M$. The Lie bracket $\wedge^2\g_1 \rightarrow \g_2$ gives a non-degenerate alternating bilinear form $\bilform$ on $W$  which is $M$-invariant via
\begin{equation} \label{bil.def}
[w,w'] = \langle w, w' \rangle E \quad \text{for $w, w' \in W$},
\end{equation}
 so $W$ is a symplectic representation of $M$.\\

We have already defined an $M$-invariant alternating bilinear form $\bilform$ on $W$ in \eqref{bil.def}. Using the chosen basis element (which is the image of $F$) of $\g_{-2}$ we can define an $M$-invariant quartic form $q$ on $W$ by the formula
\[
(\ad w)^4 \, F = q(w) E \quad \text{for $w \in W$.}
\]

For $G$ not of type $A_n$, there is a unique simple root $\gamma$ that is not orthogonal to $\beta$ and $W$ is, as a subspace of $\g$, a sum of the root subalgebras $\g_\alpha$ for $\alpha$ such that, when written as a sum of simple roots, the coefficient of $\gamma$ is 1.  By \cite[Th.~2f]{ABS}, there is an open orbit in $W$ under $H.T$, equivalently, under the group generated by $H$ and the image of the coroot $\beta^\vee$.  As $\beta^\vee$ acts by scalars on $W$, we find that there is an open $H$-orbit in $\mathbb{P}(W)$, whence $k[W]^H = k[f]$ for a (possibly constant) homogeneous $f$.

When $G$ has type $C_n$, $M = \Sp(W)$.  Because the nonzero vectors in $W$ are a single $\Sp(W)$-orbit, this representation has no invariant symmetric tensors of degree greater than zero, and in particular $q = 0$. In all other cases, $q$ is a non-zero quartic that generates the ring of $M$-invariant polynomials on $W$. Note that in the case when $G$ has type $A_n$ the subgroup $H$ fixes a quadratic form $q_2$ on $W$ \cite [Prop 6.1]{GrWall}. However, the form $q_2$ is not $M$-invariant: the quotient $M/H$ acts non-trivially and the first non-trivial invariant is the quartic $q = q_2^2$.\\

For types $B$, $D$, and $E$, it is a theorem \cite[Th.~27]{Helenius} that $q$ and $\bilform$ satisfy the algebraic identities defining a Freudenthal triple system as in \cite{Brown:E7}, \cite{Meyb:FT}, or \cite{Sp:e7}.  In the simplest case, when $G$ is split of type $D_4$, $M = (\SL_2 \times \SL_2 \times \SL_2 / {\prod \mu_2 = 1}).S_3$, and $W = V_2 \otimes V_2 \otimes V_2$ is the tensor product of the natural two dimensional representations.  The quartic form $q$ is Cayley's hyperdeterminant from \cite{Cayley:hyper}.  In another simple case, when $G$ is split of type $E_6$, $H$ is $(\SL_6/\mu_3).2$ and $W = \wedge^3 k^6$.  The quartic form is described in \cite[p.~83]{SK} or \cite[p.~4773]{BGL}.  When $G$ is split of type $E_8$, $q$ is the famous $E_7$ quartic in 56 variables described in, for example, \cite{Frd:E7} and \cite{Lurie}.

\begin{theorem}
For $G$, $M$, and $W$ as in Table \ref{BC1.table},
$M$ is the subgroup of $\GL(W)$ that stabilizes the two tensors $\bilform$ and $q$. 
\end{theorem}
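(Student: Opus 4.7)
The containment $M \subseteq \mathrm{Stab}_{\GL(W)}(\bilform, q)$ is immediate from the constructions of these tensors (from the Lie bracket $\g_1 \times \g_1 \to \g_2$ and the iterated bracket $w \mapsto (\mathrm{ad}\,w)^4 F$), so the content is the reverse inclusion. The plan is to argue case by case on the type of $G$ in Table~\ref{BC1.table}.

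The easiest cases are types $C$ and $A$. For type $C_n$, $q = 0$ and $M = \Sp(W)$, so $\mathrm{Stab}(\bilform, q) = \Sp(W) = M$ is tautological. For type $A_n$, the quartic factors as $q = q_2^2$ with $q_2$ the hyperbolic pairing on $W = V_n \oplus V_n^\vee$. Any element of $\Sp(W)$ stabilizing $q$ must send $q_2$ to $\pm q_2$; this forces the element either to preserve or to swap the two Lagrangian summands $V_n$ and $V_n^\vee$, placing it in $\GL_n.2 = M$.

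For the remaining types $B$, $D$, $E$, $G_2$, and $F_4$, the plan is a Koecher--Tits style reconstruction of $\g$ from the triple $(W, \bilform, q)$. Given $g \in \mathrm{Stab}(\bilform, q)$, one would extend $g$ to a grading-preserving linear map $\tilde g$ on $\g$ by setting $\tilde g|_{\g_{\pm 2}} = \mathrm{id}$ and $\tilde g|_{\g_1} = g$, defining $\tilde g|_{\g_{-1}}$ via the isomorphism $\mathrm{ad}(F)\colon \g_1 \to \g_{-1}$, and defining $\tilde g|_{\g_0}$ using the surjection $[\g_1, \g_{-1}] + [\g_2, \g_{-2}] \twoheadrightarrow \g_0$. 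Verifying that $\tilde g$ is a Lie algebra automorphism reduces to the Freudenthal triple system identities relating $\bilform$ and $q$: for types $B$, $D$, $E$ these are exactly Helenius's theorem, while for $G_2$ and $F_4$ they can be checked directly from the explicit realizations $W = \Sym^3 V_2$ and $W \subset \wedge^3 V_6$. Since $\tilde g$ fixes the $\SL_2$-subalgebra pointwise, one concludes $\tilde g \in M$.

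The main obstacle is this final verification: showing that invariance of $\bilform$ and $q$ on $W$ alone forces $\tilde g$ to preserve every Lie bracket in $\g$, including those internal to $\g_0$. For the FTS-covered types this amounts to well-documented identity chasing, but the exceptional cases $G_2$ and $F_4$ need either an explicit weight-space check or a uniform argument that appeals directly to the Vinberg grading of Proposition~\ref{decomp.prop}.
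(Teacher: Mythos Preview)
Your treatment of types $C$ and $A$ agrees with the paper's. For the remaining types, however, the paper takes a much shorter path than your Koecher--Tits reconstruction: it simply quotes known computations (from \cite[\S9]{BGL}, and \cite{Sp:e7} for type $E_8$) showing that the stabilizer of $q$ alone in $\GL(W)$ is $\mu_4 \cdot M$, and then observes that intersecting with $\Sp(W)$ cuts this down to $\mu_2 \cdot M = M$ (recall $\mu_2$ is the center of $M$). No reconstruction of $\g$ from $(W,\bilform,q)$ is attempted.

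Your reconstruction approach is a legitimate alternative for the types $B$, $D$, $E$ covered by Helenius's theorem: the Freudenthal triple system axioms are exactly what make the Kantor--Koecher--Tits construction recover $\g$ from $(W, \bilform, q)$, so any $g$ preserving both tensors extends to an automorphism of $\g$ fixing the embedded $\sl_2$. But the gap you yourself flag for $G_2$ and $F_4$ is real and not merely cosmetic. The paper is explicit that the FTS identities are asserted only for $B$, $D$, $E$; for $G_2$ (where $W = \Sym^3 V_2$ and $q$ is the discriminant of a binary cubic) and $F_4$ (where $W = \wedge^3(V_6)_0$) you would need either a direct stabilizer computation --- which is precisely what the cited literature supplies and what the paper invokes --- or a genuinely different reconstruction argument not relying on the FTS axioms. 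As written, your program handles most rows of the table but leaves those two unfinished.
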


\begin{proof}
This is clear for type $C_n$, where $q=0$ and $M = \Sp(W)$ is the subgroup of $\GL(W)$ stabilizing the non-degenerate symplectic form. For type $A_n$, the stabilizer of $\bilform$ and the quadratic form $q_2$ is the Levi subgroup $H$ of a Siegel parabolic in $\Sp(W)$, and the stabilizer of the quartic form $q = q_2^2$ is its normalizer $M = H.2$. In the remaining cases, the stabilizer of $q$ in $\GL(W)$ has been determined, for example, in \cite[\S9]{BGL} and for $G$ of type $E_8$ in \cite{Sp:e7}. This is the subgroup $\mu_4.M$; the subgroup $\mu_2.M = M$ also stabilizes the bilinear form $\bilform$.
\end{proof}


\section{$A_1$ case: Twisting and tensor structures}

We now change our notation and let $G$ be a simple group of adjoint type over $k$ with a minuscule embedding $\SL_2 \rightarrow G$. (We use $G_0$ to denote its split form, which is the group studied in the previous sections.) For example, suppose that the group $G$ has a relative root system of type $BC_1$ over $k$. Such a $G$ has a maximal split torus $S \cong \mathbb G_m$ of dimension one, whose non-trivial characters on $\g$ are $\{\pm1, \pm2 \}$. If we assume further that the long root spaces $\g_2$ and $\g_{-2}$ have dimension one, then $\g_{-2} + \Lie(S) + \g_{2}$ is a Lie subalgebra isomorphic to $\sl_2$. If we fix this isomorphism, the corresponding embedding $\SL_2 \rightarrow G$ is minuscule. We will want to identify these groups of rank one with certain tensor structures over $k$.\\

Choose an isomorphism
$$\phi: G_0 \rightarrow G.$$
of algebraic groups over the separable closure $\overline{k}$. Then for every element $\sigma$ in the Galois group of $\overline{k}$ over $k$, the composition
$$a(\sigma) = \phi^{-1} \circ  \sigma(\phi)$$
is an automorphism of $G_0$ over $\overline{k}$. This gives a 1-cocycle on the Galois group of $\overline{k}$ over $k$ with values in $\Aut(G_0)(\overline{k})$ . Since the minuscule embeddings of $\SL_2$ into $G_0$ form a single orbit for the automorphism group, we may modify our chosen isomorphism $\phi$ by an automorphism of $G_0$ so that it induces the identity map on the embedded subgroup $\SL_2$ in $G_0$ and $G$. Then $a(\sigma)$ lies in the stabilizer $M_0$ of the minuscule embedding and defines a $1$-cocycle on the Galois group with values in $M_0(\overline{k})$. The image of this cocycle under the map $M_0 \to \Aut(W_0, \bilform_0,q_0)$ determines a pure form $M$ of the stabilizer $M_0$ over $k$, or equivalently, a form $W$ of the tensor structure we have studied on $W_0$. The image under the map $H^1(k,M_0) \rightarrow H^1(k,\Aut(G_0))$ determines the isomorphism class of the twisted group $G$, and the twisted representation $W$ of $M$ occurs in the decomposition of its Lie algebra as in \eqref{sum.sl2}.

The map $M_0 \xrightarrow{\iota} \Sigma$ in \eqref{etale.seq} sends the 1-cocycle $a$ to a 1-cocycle $\iota(a)$ with values in $\Sigma$.  Recalling that $\Sigma$ is isomorphic to the symmetric group on $d$ letters for some $d$, $\iota(a)$ determines a degree $d$ \'etale $k$-algebra $K$ up to $k$-algebra isomorphism.  The groups $G$ and $H$ are of inner type if and only if $a$ is in the image of the map $H^1(k, H_0) \to H^1(k, M_0)$, equivalently, if and only $K$ is ``split'', i.e., is isomorphic to a product of copies of $k$.

In case $K$ is \emph{not} split, we twist sequence \eqref{etale.seq} by the 1-cocycle $\iota(a)$ to obtain an exact sequence of group schemes
\[
1 \to H_q \to M_q \to \Sigma_q \to 1.
\]
Here, $H_q$ is a quasi-split form of $H_0$ (the unique quasi-split group that is an inner form of $H$) and $\Sigma_q$ is a not-necessarily-constant \'etale group scheme.  Put $a_q$ for the image of $a$ under the twisting isomorphism $H^1(k, M_0) \to H^1(k, M_q)$.  By construction, $\iota(a_q) = 0$, so $a_q$ is the image of a 1-cocycle $b_q$ with values in $H_q(\kbar)$.


\section{$A_1$ case: $k$-forms and groups with a relative root system of type $BC_1$ } \label{SL2.eg}

We follow the notation of the preceding section, i.e., we consider an adjoint simple group $G$ with a minuscule $\SL_2$.  Such a group is obtained by twisting $G_0$ by a 1-cocycle $z$ with values in $M_0(\kbar)$.  We now describe concrete interpretations of the resulting form $H$ of the identity component of $M$ in terms of other algebraic structures, and indicate the correspondence between isotropy of $H$ (i.e., possible Tits indexes) and properties of that structure.  

We keep a specific focus on conditions for $H$ to be anisotropic, equivalently, for $G$ to have a relative root system of type $BC_1$.  Note that, if $H$ contains a split torus of rank one, then the quartic form $q$ must vanish on each of its non-trivial eigenspaces; that is, $q$ does not represent zero, then $H$ is anisotropic.  We prove the converse when $G$ has type $D_4$. \\

Consider first the case $G_0 = \PGL_{n+2}$.  If $G$ is inner, then $z$ is the image of some $z_0 \in H^1(k, \GL_n)$, so is trivial by Hilbert's theorem $90$. Hence we cannot find an inner twisting with $H$ anisotropic, and indeed a $G$ of inner type $A$ with a minuscule $\SL_2$ is split.

Suppose now that $G$ is \emph{not} inner, so it is an inner form of the quasi-split group $PU_{n+2}$ corresponding to a quadratic extension $K$ of $k$ as at the end of the preceding section, and $H_q$ is isomorphic to the unitary group $U_n$.  The set $H^1(k,U_n)$ classifies non-degenerate Hermitian spaces $W$ of rank $n$ over the quadratic field extension $K$, i.e., $H = U(h)$ for some such form.  The group $G$ will have a relative root system of type $BC_1$ over $k$ if and only if $H$ is anisotropic if and only if $h$ is anisotropic.  The group $G$ is the projective unitary group of the Hermitian space $W + N$, where $N$ is a split Hermitian space of dimension $2$. The decomposition of the Lie algebra over $k$ as in \eqref{sum.master} is
\[
\su(W+N) = \sl_2 \otimes 1 + 1 \otimes \fu(W) + V_2 \otimes W.
\]
Note that $\sl_2 \cong \su(N)$.\\

Looking at Table \ref{BC1.table} in the previous section, and using the fact that $H^1(k,\SL_n) = H^1(k,\Sp_{2n}) = 1$, we see that there are no groups $G$ with a relative root system of type $BC_1$ for inner forms of the split groups $B_n$, $C_n$, $G_2$, and $F_4$, as well as for inner forms of the split group $E_6$. 
However, the quasi-split groups $D_n$, $^2\!D_n$, $\tD$, $\sD$, $\dE$, $E_7$, and $E_8$ have inner forms over \emph{certain} fields $k$ with a relative root system of type $BC_1$. We can make this more explicit by studying certain algebraic structures on the representation $W$ of $H$.\\

\medskip

We now consider in some detail the case where $G$ has type $D_4$, i.e., where $q_0$ is Cayley's hyperdeterminant.  The group $\Sigma$ is a copy of the symmetric group on three letters.  As above, there is a natural map of $M_0 \to \Aut(W_0, \bilform_0, q_0)$, and by  \cite[Cor.~49]{Helenius} or \cite[Cor.~9.10]{BGL} the latter group is generated by the image of $M_0$ and $\mu_4$ acting as scalars.  By Galois descent, all twisted forms of the hyperdeterminant triple system are obtained by this construction.

For this case, we can prove the following.
\begin{prop} \label{hyperdet}
For a twisted form $q$ of the hyperdeterminant, the automorphism group of $q$ is isotropic if and only if $q$ represents zero.
\end{prop}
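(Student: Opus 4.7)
The ``only if'' direction follows from the observation recorded just before the proposition: if $H$ contains a split $\Gm$, then since $W$ is a faithful $M$-representation it has some nonzero eigenspace of nonzero weight $n$, and the combination of $\Gm$-invariance and degree-$4$ homogeneity of $q$ forces $q$ to vanish on that eigenspace, so $q$ represents zero.

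For the converse, my plan proceeds in two steps.  \emph{Step (A).}  Starting from a nonzero $w \in W(k)$ with $q(w) = 0$, use the Freudenthal triple system structure on $(W, \bilform, q)$ to produce a nonzero element $v \in W(k)$ of FTS rank $1$.  The $k$-rational trilinear product $T \colon W^{\otimes 3} \to W$ (defined from $\bilform$ and $q$ by the standard formulas of Brown, Meyberg, and Springer) provides the machinery: if $T(w,w,w) = 0$ then $w$ itself has rank at most $1$; otherwise $T(w,w,w)$ has strictly smaller rank than $w$, and iterated application of such operations terminates at a rank-$1$ element defined over $k$.

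\emph{Step (B).}  Given a nonzero rank-$1$ $v \in W(k)$, consider the stabilizer $\Stab_H(\langle v \rangle)$ of the line $\langle v \rangle$.  Over $\bar k$, $v$ identifies with a pure tensor $v_1 \otimes v_2 \otimes v_3 \in V_2^{\otimes 3}$, and the stabilizer of its line in $H_0 = (\SL_2^3)/\mu_2^2$ is the Borel subgroup $(B_1 \times B_2 \times B_3)/\mu_2^2$.  Since $\langle v \rangle$ is $k$-rational, $\Stab_H(\langle v \rangle)$ is a $k$-defined subgroup of $H$ whose base change to $\bar k$ is a Borel, and is therefore itself a Borel of $H$ defined over $k$.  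By Borel--Tits, the existence of a proper $k$-parabolic in the connected reductive group $H$ forces $H$ to be isotropic, so $M \supseteq H$ is isotropic as well.

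The main obstacle is Step (A): making precise and verifying that the FTS trilinear product reduces rank at each iteration and ultimately produces a rank-$1$ element over $k$ (and not merely over $\bar k$).  For the hyperdeterminant the FTS rank stratification coincides with the classical tensor-rank stratification on $V_2^{\otimes 3}$, so the required rank-reduction identities should follow from the FTS axioms together with the explicit orbit theory of $\SL_2^3$ acting on $V_2^{\otimes 3}$.  Since $T$, $\bilform$, and $q$ are all defined over $k$, every intermediate element of the iteration remains in $W(k)$, so the output $v$ is automatically $k$-rational and Step (B) applies.
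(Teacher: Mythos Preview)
Your overall strategy coincides with the paper's: use the Freudenthal trilinear product $t$ to descend from an arbitrary zero of $q$ to the minimal $H$-orbit $X$ (the Segre of pure tensors), whose points have parabolic stabilizers in $H$.  Step~(B) is correct.

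The gap is in Step~(A).  Your dichotomy ``either $T(w,w,w)=0$, in which case $w$ has rank~$\le 1$, or else $T(w,w,w)$ has strictly smaller rank'' fails at the first branch: for the hyperdeterminant there are rank-$2$ elements with $T(w,w,w)=0$ that are \emph{not} pure tensors.  Concretely, any tensor of the form $u\otimes M$ with $u\in V_2$ and $M\in V_2\otimes V_2$ of matrix rank~$2$ lies in the singular locus of $\{q_0=0\}$ --- since $q_0$ has degree~$2$ in each of the three tensor factors, every monomial of $q_0$ and of each partial derivative contains at least one coordinate with first index~$1$, hence vanishes on $e_0\otimes V_2\otimes V_2$ --- yet such a tensor is not in $X$.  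On these elements your iteration returns $0$ and stalls; there are three such $H_0$-orbits, permuted by triality.  The paper circumvents this by taking $v$ to be a \emph{smooth} $k$-point of $\{q=0\}$, so that $t(v,v,v)\ne 0$, and then verifying by an explicit calculation at the orbit representative $v=x_{\alpha_1+\alpha_2}+x_{\alpha_2+\alpha_3}+x_{\alpha_2+\alpha_4}$ that a \emph{single} application of $t$ already lands in $X$; no iteration is needed.  To salvage your argument you must either justify that a smooth $k$-zero can always be found when $q$ represents zero, or treat the rank-$2$ case separately (e.g.\ by analyzing how the Galois action on the three rank-$2$ orbits constrains the existence of $k$-points in them).
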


\begin{proof}
Over $\kbar$, $q$ is isomorphic to the hyperdeterminant $q_0$.  We leverage the study of the $H$-orbits in the projective variety $q_0 = 0$ as described in \cite{Roe:extra}, or see \cite{WeymanZ} for a more geometric viewpoint.  Specifically, there is a unique minimal closed $H_0$-invariant subvariety $X$, the $H_0$-orbit of $x_{\beta}$.  A smooth point of the variety $q_0 = 0$ is in the $H(\kbar)$-orbit of $v := x_{\alpha_1 + \alpha_2} + x_{\alpha_2 + \alpha_3} + x_{\alpha_2 + \alpha_4}$, where $x_\alpha$ denotes a generator for $\g_\alpha$ and we have numbered the simple roots $\alpha_1, \ldots, \alpha_4$ of $D_4$ as in \cite{Bou:g4} so that $\alpha_2$ corresponds to the central vertex of the Dynkin diagram.  Combining $q_0$ and $\bilform_0$, we find an $H_0$-invariant symmetric trilinear map $t_0 \!: W_0 \times W_0 \times W_0 \to W_0$ such that $\langle t_0(w,w,w),w \rangle = q_0(w)$ for all $w \in W_0$.  Because $v$ is a smooth point, $t_0(v,v,v) \ne 0$, and it follows from the $H_0$-invariance of $t_0$ that $t_0(v,v,v)$  is in the $k$-span of $x_{\alpha_2}$, i.e., belongs to $X$.  Looking now at $H$ and $q$ over $k$, if the variety $q = 0$ is nonempty, then we take $v$ to be a smooth point and observe that $X(k)$ contains $t(v,v,v)$ so is nonempty.  Then the stabilizer of $t(v,v,v)$ in $H$ is a parabolic subgroup and $H$ is isotropic.
\end{proof}

We can exhibit inner forms of quasi-split groups of type $D_4$ with a relative root system of type $BC_1$. Let $K$ be the cubic \'etale algebra determined by $\iota(a)$, so $H_q$ is the group $\Res_{K/k} \SL_2/ \Res_{K/k} (\mu_2)_{N=1}$. The inner forms of $H_q$ are isomorphic to $\Res_{K/k}(SL_1(Q))/\Res_{K/k} (\mu_2)_{N=1}$ for $Q$ a quaternion algebra with center $K$ such that the corestriction of $Q$ to $k$ (which is a central simple $k$-algebra of dimension $8^2$) is a matrix algebra.
(When $G_0$ is split, $K = k \times k \times k$ and $Q$ corresponds to three quaternion algebras $(Q_1,Q_2,Q_3)$ over $k$ such that the tensor product $Q_1 \otimes Q_2 \otimes Q_3$ is a matrix algebra.)  Explicitly, by \cite[43.9]{KMRT}, the quaternion algebra $Q$ has Hilbert symbol $(a, b)_K$ with $b \in k^\times$ and $a \in K^\times$ such that $N_{K/k}(a) = 1$.   When $K$ is a field, $H$ will be anisotropic if and only if $Q$ is a division algebra. When $K = k \times k \times k$, $H$ will be anisotropic if each of the quaternion algebras $Q_1$, $Q_2$, and $Q_3$ is a division algebra over $k$; in particular, the Brauer group of $k$ must contain a Klein $4$-group. It follows from Tits's Witt-type theorem that every isotropic group of type $\tD$ or $\sD$ arises in this way, see \cite{G:iso}.\\

Now suppose that $G_0$ is quasi-split of type $D_n$ for some $n \ge 5$; the Galois action on the Dynkin diagram determines the quadratic \'etale $k$-algebra $K$.  The group $H_q$ is isomorphic to $(\SL_2 \times \SO(q))/\mu_2$ for $q$ a sum of $n-2$ hyperbolic planes and the 2-dimensional orthogonal space $K$ with norm $N_{K/k}$.
Every inner twist $H$ of $H_q$ is of the form $(\SL_1(Q) \times \SO(h))/\mu_2$ for a (possibly split) quaternion $k$-algebra $Q$ and a skew-hermitian form $h$ on a $Q$-module $V$ of rank $n - 1$ such that $h$ has discriminant $K$ in the sense of \cite[\S10]{KMRT}.  Such a group $H$ is isotropic if and only if $h$ represents 0, i.e., if and only if there is some nonzero $v \in V$ such that $h(v,v) = 0$, see \cite[\S17.3]{Sp:LAG}. \\ 

Next suppose that $G_0$ is quasi-split of type $\dE$, which determines a quadratic field extension $K$ of $k$ and the quasi-split group $H_q$ is $\SU_6/\mu_3$.  Every inner form $H$ of $H_q$ is $\SU(B, \tau)/\mu_3$ for $B$ a central simple $K$-algebra of dimension $6^2$ and $\tau$ an involution on $B$ that  restricts to the nontrivial $k$-automorphism of $K$ and such that the discriminant algebra $D(B, \tau)$ defined in \cite[\S10.E]{KMRT} is split. Indeed, the Brauer class of the discriminant algebra is the Tits algebra for the representation $W$.  Because $B^{\otimes 3}$ is Brauer-equivalent to $D(B, \tau) \otimes K$ by \cite{Ti:R} or \cite[10.30]{KMRT}, it follows that $B = M_2(B_0)$ for some central simple $K$-algebra $B_0$ of dimension $3^2$ whose corestriction to $k$ is a matrix algebra.  Such a group $H$ is isotropic if and only if $\tau(b)b = 0$ for some nonzero $b$, i.e., if and only if $\tau$ is isotropic in the sense of \cite[6.3]{KMRT}.  Alternatively, one can view $\tau$ as the involution adjoint to a hermitian form $h$ on a rank 2 $B_0$-module $V$ as in \cite[\S4.A]{KMRT}, in which case we have: $H$ is isotropic if and only if $h(v,v) = 0$ for some nonzero $v \in V$.\\

When $G_0$ is split of type $E_7$, $H_0$ is a half-spin group and $W_0$ is the half-spin representation, i.e., $H_0$ is the image of $\Spin_{12} \to \GL(W_0)$.  Every inner form $H$ of $H_0$ is
 isogenous to $\SO(A, \sigma)$ where $A$ is a central simple $k$-algebra of dimension $12^2$ and $\sigma$ is an orthogonal involution with trivial discriminant such that the even Clifford algebra $C(A, \sigma)$ as defined in \cite[\S8]{KMRT} has one split component (namely the action on $W$).  Such pairs $(A, \sigma)$ have recently been described more explicitly, see \cite{QT12}.  Such an $H$ is isotropic if and only if the involution $\sigma$ is isotropic, i.e., if and only if $\sigma(a)a = 0$ for some nonzero $a \in A$.  
 
 \begin{rmks}[for $G$ of type $E_7$] See \cite[Prop.~3]{Igusa} for a description of the $H_0$-orbits on $W_0$.

To provide an anisotropic form $H$ of $H_0$, it is sufficient to produce an anisotropic 12-dimensional quadratic form in $I^3 k$ over some $k$.  This is easily done using Pfister's explicit description of such forms from \cite{Pfister}.
\end{rmks}

Finally when $G_0$ is split of type $E_8$, $W_0$ is the 56-dimensional minuscule representation of $H_0$, the split simply-connected group of type $E_7$.  Each inner form $H$ of $H_0$ has a corresponding $56$-dimensional representation $W$ over $k$ and we obtain then twisted forms of the Freudenthal triple system arising in the split case. As above, if $H$ is isotropic, then $q(w) = 0$ for some nonzero $w \in W$.  See \cite[\S7]{G:struct} for the structures in $W$ corresponding to parabolic subgroups of $H$ and, for example, \cite{Krut:E7} for a discussion of the variety $q = 0$ defined by the vanishing of the quartic form.

\begin{rmks}[for $G$ of type $E_8$]
(i): See \cite{Roe:extra}, \cite[Th.~7.6]{G:struct}, or \cite{Krut:E7} for a description of the $H$-orbits in $W$.  The description in \cite{G:struct} describes $k$-points on the projective homogeneous spaces for $H$ in terms of \emph{inner ideals} in $W$, i.e., subspaces $I$ such that $t(I,I,W) \subseteq I$.

(ii): Diverse constructions of anisotropic pure inner forms $H$ exist, see for example \cite[Prop.~2(B)]{Ti:si}, \cite[Example 7.2]{AF:CD}, \cite[Appendix A]{G:lens}, or \cite[Cor.~10.17]{GPS}.

(iii): Groups with relative root systems of type $BC_1$, viewed from the angle of Lie algebras with a 5-term grading as in \eqref{5term}, have been studied in the context of structurable algebras as in \cite{A:structintro} and  \cite{A:models}.

(iv): When $G$ has type $D_4$, we proved (Prop.~\ref{hyperdet}) that if $q$ represents zero, then $H$ is isotropic.  No proof on the same outline is possible in the $E_8$ case, as we illustrate with examples.  Specifically, first note that there are groups of type $E_8$ with semisimple anisotropic kernel of type $D_6$ or $E_6$.  For such groups, $H$ is isotropic with semisimple anisotropic kernel of the same type, and the corresponding form $q$ represents zero, i.e., there is a smooth $k$-point $v$ on the hypersurface $q = 0$.  The groups $H$ with anisotropic kernel of type $E_6$ correspond to a $W$ containing a 1-dimensional inner ideal but no 12-dimensional inner ideal; those with anisotropic kernel of type $D_6$ correspond to a $W$ containing a 12-dimensional inner ideal but no 1-dimensional inner ideal.  Therefore, there cannot be a deterministic mechanical procedure to construct from $v$ an inner ideal of $W$, in contrast to the $D_4$ case where the $k$-span of $t(v,v,v)$ provides a 1-dimensional inner ideal.

(v): Our methods do fail to capture four possibilities with relative root system of type $BC_1$, corresponding to $G$ having one of the following Tits indexes:
\[
\begin{picture}(4,1)
 \put(0,0){\usebox{\Fiv}} 
    \put(3.5,0.5){\circle{\lrad}}
\end{picture} \quad
\begin{picture}(3.5, 1.1)
\put(0,0){\usebox{\dEpic}}
\put(2.5,0.5){\oval(0.4,0.75)}
\end{picture}\quad
\begin{picture}(7,1.9)
 \put(0,0){\usebox{\Esev}} 
  \put(5,0.7){\circle{\lrad}}
\end{picture}\quad
\begin{picture}(8,1.9)
 \put(0,0){\usebox{\Eviii}} 
 \put(1,0.7){\circle{\lrad}}
 \end{picture}
\]
In these cases, the long roots have multiplicity 7, 8, 10, and 14 respectively.
\end{rmks}

\begin{rmk*}
The paper \cite{BDeMedts:new} gives results related to the case where $G$ has relative root system of type $BC_2$ and $J = \SL_2 \times \SL_2$.
\end{rmk*}

\section{Minuscule embeddings and relative root systems} \label{two.lengths}

Let $G$ be a split, simple adjoint algebraic group with roots of different lengths.  As mentioned in the introduction, the embedding $J \to G$ is minuscule when $J$ is the subgroup generated by the long root subgroups.  There are four cases to consider.\\

For type $B_n$, $G$ is the split adjoint group $\SO_{2n+1}$ and $J$ is the subgroup which fixes a non-isotropic line in the standard representation $V_{2n+1}$, with orthogonal complement $V_{2n}$. This gives an isomorphism of $J$ with the split even orthogonal group $\SO_{2n}$. The action of $J$ on $\g/\frak{j}$ is given by the standard representation $V_{2n}$.\\

For type $C_n$, $G$ is the adjoint group $\Sp_{2n}/\mu_2$ and $J$ is the subgroup stabilizing a decomposition of the symplectic space into non-degenerate planes. The group $J$ is isomorphic to the  split group $\SL_2^n/\Delta \mu_2$, and the action of $J$ on $\g/\frak{j}$ is by a direct sum of the four dimensional representations $V_2^i \otimes V_2^j$, with $1 \leq i < j \leq n$.\\

For type $G_2$,  the subgroup $J$ is isomorphic to $\SL_3$ and its action on $\frak{g_2}/\frak{\sl_3}$ is by the direct sum of the two three dimensional representations $V_3$ and $V_3^{\vee}$.\\

For type $F_4$, the subgroup $J$ is isomorphic to the split group $\Spin_{4,4}$ and its action on $\frak{f_4}/\frak{\spin_{4,4}}$ is by the direct sum of the three eight dimensional representations $V_8$, $V_8'$, and $V_8''$.\\

In all four cases, the centralizer of $J$ in $G$ is the center $Z(J)$, which is isomorphic to $\mu_2$, $(\mu_2)^{n-1}$, $\mu_3$, and $(\mu_2)^2$ respectively. The pinned outer automorphism group of $J$ is isomorphic to the symmetric group $S_k$ with $k = 2,n,2,3$ respectively, and the normalizer of $J$ in $G$ is equal to $J.S_k$. We should emphasize that in all these cases, we are only establishing the existence of a minuscule embedding, not the uniqueness up to conjugation in $G$ as we did for $\SL_2$. For example, the adjoint group $\PGL_3(k)$ acts on the set of minuscule embeddings $\SL_3 \rightarrow G_2$ over $k$ by its action by conjugation on $\SL_3$. Only the conjugates by the subgroup $\SL_3(k)/\mu_3(k)$ yield conjugate embeddings. Hence the conjugacy classes of embeddings form a principal homogeneous space for the quotient group $k^\times/k^{\times 3}$. In all the four cases, $J$ is isomorphic to the split group mentioned, but the isomorphism is not unique and $J$ has inner automorphisms which do not come from conjugation in $G$.\\

We now consider the case where $G$ need not be split over $k$, but has a relative root system of type $B_n$, $C_n$, $G_2$, or $F_4$.  Let $S$ be a maximal split torus in $G$ and \emph{assume that the long root spaces for $S$ acting on the Lie algebra $\g$ all have dimension one.}  Note that this hypothesis is automatic in case $G$ is split, because root spaces for a maximal torus all have dimension one.  It also holds when the relative root system is of type $G_2$ or $F_4$, as one can see by comparing the table of relative root systems from \cite[pp.~129--135]{Selbach} with Tables \ref{G2.table} and \ref{F4.table}.

Let $J \rightarrow G$ be the subgroup generated by $S$ and the long root groups. Then the subgroup $J$ is given above, and its action on $\g/\frak{j}$ decomposes as a direct sum of minuscule representations. Indeed, the remaining weights for $S$ are the short roots, and they are the weights which occur in the minuscule representations of $J$. These minuscule representations of $J$ will now occur with higher multiplicity in $\g/\frak{j}$, as the short root spaces will have multiplicity greater than one when $G$ is not split.

Let $H$ be the centralizer of $J$ in $G$. Since the ranks of $J$ and $G$ are the same (they both have maximal split torus $S$) the subgroup $H$ is anisotropic. It contains the anisotropic kernel of $G$ as its connected component, as the anisotropic kernel must act trivially on each long root space. Since $H$ centralizes the torus $S$, it acts linearly on each short root space $W_{\alpha} \subset \g$, and the isomorphism class of the representation $W_{\alpha}$ depends only on the orbit of the short root $\alpha$ under the action of the Weyl group of $S$ in $J$.\\

For $G$ with relative root system of type $B_n$ there is a single orbit of the Weyl group of $J = \SO_{2n}$ on the set of $2n$ short roots. The action of $J \times H$ on the quotient $\g/(\frak{j} + \h)$ is given by the tensor product $V_{2n} \otimes W$, where $W$ is the orthogonal representation of $H$ on the short root space $W_{\alpha}$ with $\alpha = e_1$.\\

For $G$ with relative root system of type $C_n$ there are $\binom{n}{2}$ orbits of the Weyl group of $J = (\SL_2)^n/\Delta \mu_2$ on the set of $4\binom{n}{2}$ short roots. The action of $J \times H$ on the quotient $\g/(\frak{j} + \h)$ is given by the direct sum of representations $\sum_{1 \leq i < j \leq n} (V_2^i \otimes V_2^j) \otimes W_{ij}$, where $W_{ij}$ is the orthogonal representation of $H$ on the short root space $W_{\alpha}$ with $\alpha = e_i + e_j$. Although these representations are not isomorphic, they are exchanged by the outer automorphism group of $J$, so all have the same dimension.\\

For $G$ with relative root system of type $G_2$ there are two orbits of the Weyl group of $J = \SL_3$ on the set of $6$ short roots. The action of $J \times H$ on the quotient $\g/(\frak{j} + \h)$ is given by the direct sum of representations $V_3 \otimes W + V_3^{\vee}\otimes W^{\vee}$, where $W$ is the representation on one of the short root spaces. We will see in \S\ref{SL3.eg} that $W$ and its dual $W^{\vee}$ have dimensions either 1, 3, 9, or $27$, and that both have an $H$-invariant cubic form.\\

For $G$ with relative root system of type $F_4$ there are three orbits of the Weyl group of $J = \Spin_{4,4}$ on the set of $24$ short roots. The action of $J \times H$ on the quotient $\g/(\frak{j} + \h)$ is given by the direct sum of representations $V_8 \otimes W + V_8'\otimes W' + V_8'' \otimes W''$, where $W$, $W'$, and $W''$ are three orthogonal representations of the same dimension. This dimension is either 1, 2, 4, or $8$, see \S\ref{D4.eg}.\\

Here is an example where the relative root system has type $B_n$ and the long root spaces have dimension one. Let $V$ be a non-degenerate orthogonal space over $k$ of odd dimension $d$ and rank $n$, so $d \geq 2n+1$. Let $X$ and $X'$ be a pair of dual maximal isotropic subspaces of dimension $n$, and let $W = X + X'$  be the corresponding non-degenerate subspace of dimension $2n$. Then $V = W + W^{\perp}$ and the adjoint group $G = \SO(V)$ has a relative root system of type $B_n$. The long roots have multiplicity one and give the subgroup $J = \SO(W) = \SO_{2n}$. The short roots have multiplicity equal to the dimension of $W^{\perp}$ and the centralizer $H = \O(W^{\perp})$ of $J$ acts on the short root spaces by the standard representation. The decomposition of the Lie algebra as in \eqref{sum.master} is 
\[
\so(V) = \so(W) + \so(W^{\perp}) + W \otimes W^{\perp}.
\]
A similar decomposition occurs for orthogonal spaces of even dimension $d \geq 2n+2$, where $n$ is the rank.\\

An example where the relative root system has type $C_n$ and the long root spaces have dimension one comes from the real groups $G$ that act on tube domains. Here $n$ is the rank of the domain. We will assume $n \geq 3$, as the cases where $n = 2$ are already covered by the $B_n$ case above. There are then three groups $G = \Sp_{2n}/\mu_2$,
$G = \PU_{n,n}$, and $G = \SO^*_{4n}/\mu_2$, together with the exceptional group $E_{7,3}/\mu_2$ which only occurs when $n = 3$. In the first case $G$ is split, $H$ is the center of $J$, and the orthogonal representations $W_{ij}$ all have dimension one. In the second case, $G$ is quasi-split, $H = \U_1^n/\U_1$, and the orthogonal representations $W_{ij}$ all have dimension $2$.
In the third case, $H = (\SU_2)^n/\Delta \mu_2$ and the orthogonal representations $W_{ij}$ all have dimension four. In the exceptional case, $H$ is the compact form $\Spin_8$ of $\Spin_{4,4}$ and the orthogonal representations $W,W'$ and $W''$ all have dimension $8$.

\section{$A_2$ case: Minuscule embeddings of $\SL_3$} \label{A2.basic}

In this section, our objective is to describe the minuscule embeddings of $\SL_3$ into split, simple  groups $G$ of adjoint type over $k$.  (We will use this description to give a classification of groups with a relative root system of type
 $G_2$.) If we have such an embedding, with centralizer $H$, we obtain a $\mu_3$-decomposition of the Lie algebra of $G$ as in \eqref{sum.master}:
 \[
 \g = \sl_3 + \h + V_3 \otimes V + V_3^{\vee} \otimes V^{\vee}.
 \]
  
 Restricting the minuscule embedding $\SL_3 \rightarrow G$ to an embedded $\SL_2 \hookrightarrow \SL_3$ that is itself minuscule provides a minuscule embedding $\SL_2 \hookrightarrow G$. Indeed, the restriction of 
the standard representation $V_3$ (and its dual) is the direct sum of the standard representation of $\SL_2$ and the trivial representation, and the restriction of the adjoint representation $\sl_3$ is the direct sum of the adjoint representation $\sl_2$, two copies of the standard representation and one copy of the trivial representation. Hence the decomposition of $\g$ under $\SL_2$ is as in \eqref{sum.sl2}:
\[
\g = \sl_2 + \m + V_2 \otimes W.
\]
Let $S$ be the centralizer of $\SL_2$ in $\SL_3$, which is a split torus of dimension one and has character group isomorphic to $\Z$. We can fix an isomorphism with $\Gm$, so that the characters of $S$ on $V_3$ are $1,1, -2$ and the characters of $S$ on $V_3^{\vee}$ are $-1,-1,2$. It follows that the characters of $S$ on $\sl_3 \subset V_3 \otimes V_3^\vee$ are $3,3,0,0,0,0,-3,-3$. The centralizer of $S$ in $\SL_3$ is isomorphic to $\GL_2$. Since $S$ centralizes $\SL_2$, it is contained in the stabilizer $M$ of the minuscule embedding of $\SL_2$ and acts on the two representations $\m$ and $W$. \\

From the decomposition of $\g$ into representations of $\SL_3 \times H$
we see that the only characters of $S$ that appear (with multiplicities) in $\g$ are $\{-3,-2,-1,0,1,2,3\}$. Since the intersection of $S$ and $\SL_2$ is the center $\mu_2$, the torus $S$ acts by even characters on $\m$ and by odd characters on $W$. 
Therefore
$S$ acts by the three characters $-2,0,2$ on $\m$,  and by the four characters $-3,-1,1,3$ on $W$. The characters $3$ and $-3$ only appear in the summand $\sl_3$, so each appears with multiplicity $2$ in $\g$. Hence the characters $3$ and $-3$ each appear in $W$ with multiplicity one, and the characters $1$ and $-1$ each appear with multiplicity equal to $\dim V$. The multiplicities of the characters $2$ and $-2$ in the representation of $S$ on $\m$ are also equal to $\dim V$. By counting dimensions, this gives the multiplicity of the trivial character of $S$ in $\m$, and we see that the centralizer of $S$ in $M$ is isomorphic to $S.H$. \\

Since $S$ has only three weights on $\m$, the inclusion $S \rightarrow M$ is a minuscule co-character. The centralizer $S.H$ of $S$ is a Levi subgroup of a parabolic in $M$, whose abelian unipotent radical is isomorphic to the representation $V$ of $H$. Moreover, the action of this centralizer of $S$ on the symplectic representation $W$ of $M$ stabilizes two lines, where $S$ acts by the characters $3$ and $-3$ and $H$ acts trivially. Since we know $M$ and the representations $\m$ and $W$ from our previous classification of minuscule embeddings of $\SL_2$, these conditions determine the co-character $\mu: S \rightarrow G$ up to conjugacy (when it exists). If we fix a maximal split torus $T$ in $G$ and a set of positive roots $\alpha_i$, there is no loss of generality in assuming that $\mu$ is a dominant co-character of $T$,  and we can describe $\mu$ by giving the inner products $\langle \mu, \alpha_i \rangle$ for all $i$. \\

When $G$ is exceptional, the co-character $\mu$ has inner product $1$ with a unique simple root $\alpha$, which has multiplicity $3$ in the highest root, and inner product $0$ with all other simple roots. When $G$ has type $D_4$, $\mu$ has inner product $1$ with the three simple roots $\alpha_i$ which have multiplicity $1$ in the highest root, and inner product $0$ with the remaining simple root. When $G$ has type $B_n$ or $D_n$ for $n \geq 5$, $\mu$ has inner product $1$ with the two simple roots $\alpha_1$ and $\alpha_3$ and inner product $0$ with the remaining simple roots.  See Figure \ref{SL3.fig} for an illustration.  (When $G$ has type $A_{n-1}$, the minuscule $\SL_3$ is the subgroup of  $\PGL_n$ stabilizing a subspace of dimension $3$, and $\sl_3 + \h + V_3 \otimes V$ is a parabolic subalgebra of $\g$.  We ignore this degenerate case, cf.~\cite[Th.~4.8]{Vinberg:na}.) \\

\begin{figure}[hbt]
\begin{center}
\begin{picture}(7,2)(0,0)
\multiput(1,1)(1,0){4}{\circle*{\darkrad}}
\put(1,1){\line(1,0){3}}
\put(4,0.8){\hspace{6pt}\mbox{$\cdots$}}
\multiput(6.2,1)(1,0){3}{\circle*{\darkrad}}
\put(6.2,1){\line(1,0){1}}
\put(7.2,0.9){\line(1,0){1}}
\put(7.2,1.1){\line(1,0){1}}

\put(7.4, 0.85){\makebox(0.2,0.3)[s]{$>$}}
\multiput(1,1)(2,0){2}{\circle{\lrad}}
\end{picture} 

\begin{picture}(7,2)(0,0)
\multiput(1,1)(1,0){4}{\circle*{\darkrad}}
\put(1,1){\line(1,0){3}}
\put(4.2,0.8){\hspace{6pt}\mbox{$\cdots$}}
\multiput(6.2,1)(1,0){2}{\circle*{\darkrad}}
\put(6.2,1){\line(1,0){1}}
\put(7.2,1){\line(4,3){1}}
\put(7.2,1){\line(4,-3){1}}
\put(8.2,1.75){\circle*{\darkrad}}
\put(8.2,0.25){\circle*{\darkrad}}
\multiput(1,1)(2,0){2}{\circle{\lrad}}
\end{picture} \hspace{1in}
\begin{picture}(3,2)(0,0)
\multiput(0.2,1)(1,0){2}{\circle*{\darkrad}}
\put(0.2,1){\line(1,0){1}}
\put(1.2,1){\line(4,3){1}}
\put(1.2,1){\line(4,-3){1}}
\put(2.2,1.75){\circle*{\darkrad}}
\put(2.2,0.25){\circle*{\darkrad}}
\put(0.2,1){\circle{\lrad}}
\put(2.2,1.75){\circle{\lrad}}
\put(2.2,0.25){\circle{\lrad}}
\end{picture}

\begin{picture}(5,2)
  \put(0,0){\usebox{\Evi}} 
  \put(2.5,0.5){\circle{\lrad}}
\end{picture}
\begin{picture}(7,1.9)
  \put(0,0){\usebox{\Esev}} 
  \put(2,0.7){\circle{\lrad}}
\end{picture}
\begin{picture}(8,1.9)
 \put(0,0){\usebox{\Eviii}} 
 \put(6,0.7){\circle{\lrad}} 
 \end{picture}  \\
\begin{picture}(4,1)
 \put(0,0){\usebox{\Fiv}} 
    \put(1.5,0.5){\circle{\lrad}}
\end{picture}
\begin{picture}(4,1)
\put(0,0){\usebox{\Gii}}
\put(2,0.5){\circle{\lrad}}
\end{picture}
 \caption{Dynkin diagrams with circles around those simple roots $\alpha$ such that $\langle \mu, \alpha \rangle \ne 0$.} \label{SL3.fig}
\end{center}
\end{figure}

Having determined the co-character $\mu$, we obtain a seven term grading of $\g$
\begin{equation} \label{SL3.seven}
\g = \g(-3) + \g(-2) + \g(-1) + \g(0) + \g(1) + \g(2) + \g(3)
\end{equation}
where the summands are representations of the centralizer of $S$, which is isomorphic to $(\GL_2 \times H)/\mu_3$. The summand $\g(1)$ is isomorphic to the representation $V_2 \otimes V$, the summand $\g(2)$ is isomorphic to $\det \otimes V^{\vee}$, and the summand $\g(3)$ is isomorphic to $V_2 \otimes \det$ as a representation of $\GL_2$ (tensor the trivial representation of  $H$). The centralizer of $\mu_3 \hookrightarrow S$ is then isomomorphic to $(\SL_3 \times H)/\mu_3$, and this gives a minuscule embedding of $\SL_3$. We describe the centralizer $H$
and the representations $V$ and $V^{\vee}$ of $H$ in Table  \ref{G2.table}, cf.~\cite{DeligneGross} and \cite[Table 2]{Vinberg:na}. \\

\begin{table}[hbt]
\[
\begin{array}{ccc}
G & H & V \\ \hline
\SO_{2n+5} & \SO_{2n-1} \times \GL_1 & V_1(-2) \oplus V_{2n-1}(1)\\
\SO_{2n+4}/\mu_2 &(\SO_{2n-2} \times \GL_1)/\mu_2 & V_1(-2) \oplus V_{2n-2}(1)\\
G_2 &\mu_3& V_1\\
D_4 & (\GL_1^3)_{N=1} & V_1 + V_1' + V_1''\\
F_4  &\SL_3  & \Sym^2(V_3) = V_6\\
E_6/\mu_3 &(\SL_3 \times \SL_3)/\mu_3  &V_3 \otimes V_3' = V_9  \\
E_7/\mu_2 &\SL_6/\mu_2  &\wedge^2(V_6) = V_{15}  \\
E_8  & E_6 &V_{27}  \\
\end{array}
\]
\caption{For minuscule $\SL_3$ in $G$, the centralizer $H$ and its representation $V$} \label{G2.table}
\end{table}


Since the relevant node or nodes on the Dynkin diagram are stable under graph automorphisms, we find that the full stabilizer $M$ of the minuscule embedding $\SL_3 \rightarrow G$ in $\Aut(G)$ is $O_{2n-2} \times \GL_1$ for type $D_{n+2}$, $(\GL_1^3)_{N=1}.S_3$ for type $D_4$, and $(\SL_3 \times \SL_3)/\mu_3 .2$ for type $E_6$.

\section{$A_2$ case: invariant tensors} \label{SL3.invt}

We now approximately follow the path of \S\ref{SL2.invt}, except with a minuscule embedding $\SL_3 \to G$ as in the previous section.  We assume here that $G$ is of type $F_4$ or $E$.  (The tiny cases where $G$ has type $G_2$ or $D_4$ have similar outcomes but involve ad hoc arguments that we omit here.)

Let $G'$ be the subgroup of $G$ generated by $H$ and the root subgroups $G_{\pm \alpha}$ where $\alpha$ is the unique simple root such that $\langle \mu, \alpha \rangle \ne 0$ as in Figure \ref{SL3.fig}.  It is semisimple.  The coefficient of $\alpha$ in the highest root of $G'$ is 1, so $\mu$ gives a 3-grading $\g' = \g'(-1) \oplus \g'(0) \oplus \g'(1)$ such that $\g'(1) = V$ and $\g'(-1)$ is the dual of $V$ as a representation of $H$.  (This can be seen by exactly the same deduction as the observation that $\g(1) = V_2 \otimes V$ in \eqref{SL3.seven}, appealing to \cite[Th.~2]{ABS}.)  The subalgebra $\g'$ is called the \emph{stock} in \cite{Vinberg:na}.

By the same argument as in \S\ref{SL2.invt}, $k[V]^H = k[f]$ for some homogeneous $f$.  In case $k = \C$, a routine calculation with weights shows that $f$ has degree 3.  As in \cite[pp.~4767, 4768]{BGL}, one deduces that $\deg f = 3$ in all cases.  (The argument in \cite{BGL} is uniform and relies on \cite{RRS}.  Alternatively, one can calculate by hand in each case.)

Looking from a different angle, the 3-grading shows that $\g'(1) \oplus \g'(-1)$ is a \emph{Jordan pair}, meaning that the quadratic maps $Q_\eps \!: \g'(\eps) \to \Hom_k(\g'(-\eps), \g'(\eps))$ defined by
\[
Q_\eps(x)(y) := (\ad x)^2 y \quad \text{for $x \in \g'(\eps)$ and $y \in \g'(-\eps)$}
\]
for $\eps = \pm 1$
satisfy certain identities; see \cite{Loos} for an extensive theory.  This is the point of view of \cite[esp.~\S2]{Sp:jord}, \cite{Loos:alg}, \cite{Neher:3lie}, and \cite[Ch.~11]{Lopez}; it can be viewed in the context of the Tits-Kantor-Koecher construction of Lie algebras.  (Yet another angle is pursued in \cite{BenkartElduque}, where the authors allow the representation $\g/\sl_3$ of $\SL_3$ to include also copies of $\sl_3$ in addition to copies of $V_3$ and $V^\vee_3$, and use this to construct a structurable algebra from $\g$.)

Given a Jordan algebra $J$, one can construct from it a Jordan pair $(J, J)$, and the Jordan pair $\g'(1) \oplus \g'(-1)$ is of this form, see \cite[\S14, esp.~14.31]{Sp:jord} or \cite[Prop.~4.2]{Vinberg:na}.  In each case $J$ is a cubic Jordan algebra.  Specifically:
\begin{itemize}
\item For $G$ of type $E_8$, $G'$ is of type $E_7$ and $J$ is a 27-dimensional exceptional Jordan algebra, sometimes called an \emph{Albert algebra}.
\item For $G$ of type $E_7$, $G'$ is of type $D_6$ and $J$ is the Jordan algebra of 6-by-6 alternating matrices with norm the Pfaffian, as in \cite[14.19]{Sp:jord}.
\item For $G$ of type $E_6$, $G'$ is of type $A_5$ and $J$ is the Jordan algebra of 3-by-3 matrices with norm the determinant, as in \cite[14.16]{Sp:jord}.
\item For $G$ of type $F_4$, $G'$ is of type $C_3$ and $J$ is the Jordan algebra of 3-by-3 symmetric matrices with norm the determinant, as in \cite[14.17]{Sp:jord}.
\end{itemize}
Alternatively, $J$ is the Jordan algebra of 3-by-3 hermitian matrices with entries in a composition algebra $C$ of dimension 8, 4, 2, or 1 respectively.  

\section{$A_2$ case: $k$-forms and groups with relative root system of type $G_2$} \label{SL3.eg} 

We now describe $k$-forms of the groups appearing in the previous section.  As in \S\ref{SL2.eg}, we put a subscript 0 on the groups involved to indicate the split group.

The automorphism group $H'_0$ of the Jordan algebra structure on $V_0$ is the subgroup of $H_0$ fixing the identity element $e \in V$, see \cite[14.11]{Sp:jord} or \cite[Th.~4]{Jac:J1}.  Moreover, $H_0$ has a central $\mu_3$ that acts as scalars on $V$.  It follows that the stabilizer of the line $ke$ in $\mathbb{P}(V_0)$ is $\mu_3 \times H'_0$.  On the other hand, the $H_0$-orbit of $ke$ is dense; it is the collection of lines $kv$ such that $f(v) \ne 0$.  Therefore, the natural map $H^1(k, \mu_3 \times H'_0) \to H^1(k, H_0)$ is surjective as in \cite[9.11]{G:lens} (cf.~\cite{Sp:cubic}), and twisting $G_0$ by a cocycle with values in $H_0$ amounts to twisting separately by a cocycle with values in $H'_0$ and by a cocycle with values in $\mu_3$.  The latter twist does not affect the isomorphism class of the resulting $H$ and therefore by Tits's Witt-type theorem does not affect the isomorphism class of the resulting twist $G$ of $G_0$.  In summary, the twists of $G_0$ by a cocycle with values in $H_0$ can be obtained by twists by cocycles with values in $H'_0$.  In particular, the twist $V$ of $V_0$ so obtained will be a Jordan algebra, and the generic norm on $V$ will be a cubic form $f$ invariant under $H$.

Thus we can determine the groups $G$ with a relative root system of type $G_2$ such that the long roots have multiplicity one.  We use a method similar to our determination of the groups with a relative root system of type $BC_1$. Namely, the split torus in $G$ together with the long root groups generate a minuscule $\SL_3 \rightarrow G$. Let $G_0$ be the split inner form of $G$ over $k$, and let $\SL_3 \rightarrow G$ be a minuscule embedding as described in the previous section, associated to the co-character $\mu$. Let $M_0$ be the stabilizer of this embedding in $\Aut(G_0)$, so $M_0$ has connected component the group $H_0$ tabulated in Table \ref{G2.table}. Since all minuscule embeddings of $\SL_3$ into $G_0$ are conjugate over $\overline{k}$ we may choose an isomorphism $\phi: G_0 \rightarrow G$ over $\overline{k}$ which is the identity on the embedded subgroups $\SL_3$. This gives a cohomology class in $H^1(k,M_0(\overline{k}))$, which determines the isomorphism class of $H$ and $G$. The question is whether we can find such a class so that the corresponding form $H$ of $H_0$ is anisotropic.

For these Jordan algebras, the following are equivalent by \cite[37.12, 38.3]{KMRT}:
\begin{enumerate}
\item  \label{alb.cubic} The cubic form $f$ on $V$ represents zero.
\item \label{alb.zdiv} The algebra $V$ has zero divisors.
\item \label{alb.iso} $H$ is isotropic.
\end{enumerate}
That is, $G$ will have relative root system of type $G_2$ if and only if $f$ does not represent zero, if and only if $V$ is a division algebra.

In the cases where $\dim V = 6$ or 15 (i.e., $G_0 = F_4$ or $E_7$), the equivalent conditions hold.  This can be seen by Jordan algebra methods, as is done in \cite[37.12]{KMRT}.  It can be seen also by Galois descent, because inner twists of $\SL_3$ and $\SL_6/\mu_2$ are isotropic.

In the remaining cases, anisotropic forms of $H$ exist.  Specifically, for the case $\dim V = 9$, find a field $k$ and a central associative division $k$-algebra $A$ of dimension $3^2$.  The algebra $V$ with underlying vector space $A$ and product $a \cdot b = \frac12 (ab + ba)$ where juxtaposition denotes the associative multiplication in $A$ is a Jordan algebra without zero divisors.  Moreover, adjoining an indeterminate $t$ to $k$, there is an Albert algebra over $k(t)$ with no zero divisors, namely the ``first Tits construction'' denoted $J(A, t)$, compare \cite[Prop.~3(B)]{Ti:si}.

\begin{rmk*}
In case $G_0 = E_6$, the group $M_0$ with identity component $H_0$ has two components, and the same reasoning applies for twisting by a cocycle with values in $M_0$.  Twisting $V_0$ by a 1-cocycle with values in $M_0$ whose image in $H^1(k, M_0/H_0)$ is a quadratic field extension $K$ of $k$ gives a Jordan algebra with 
underlying vector space the $\tau$-symmetric elements of $(B, \tau)$ where $B$ is a central simple $K$-algebra and $\tau$ is a unitary involution on $B$ whose restriction to $K$ is the nontrivial $k$-algebra automorphism.
\end{rmk*}

\section{$D_4$ case: minuscule embeddings of $\Spin_{4,4}$ and groups with relative root system of type $F_4$} \label{D4.eg}

Like the case of $G_2$, the groups with a relative root system of type $F_4$ are all exceptional. We obtain a minuscule embeddings of the long root subgroup $\Spin_{4,4} \rightarrow G$, which in the split cases gives the following decomposition of $\g$ as in \eqref{sum.master}:
\begin{equation} \label{f4.decomp}
\g = \spin_{4,4} + \h + V_8 \otimes W + V_8' \otimes W' + V_8'' \otimes W''.
\end{equation}
Table \ref{F4.table} lists the centralizers $H$ and the dimensions of the three orthogonal representations $W$, $W'$, and $W''$ of $H$ which occur, cf.~\cite{DeligneGross}.  
Note that there is a copy of the symmetric group $\Sigma$ on 3 letters in $G$ normalizing $J$ and acting as outer automorphisms on $J$.  (This is true in the case where $G = F_4$ as in \cite[23.13, 26.5, 38.7]{KMRT}, and the other embeddings $J \to G$ factor through an $F_4$ subgroup.)  So $\Sigma$ acts on $J.H$ and permutes the three $V_8 \otimes W$ summands; in this sense the three summands are interchangeable.  

\begin{table}[bht]
\[
\begin{array}{ccc}
G&H&\dim W \\ \hline
F_4 &\mu_2 \times \mu_2 &1\\
E_6/\mu_3 &(\mathbb {G}_m)^2 &2\\
E_7/\mu_2 &(\SL_2)^3/\Delta \mu_2 &4 \\
E_8 &\Spin_{4,4} &8 
\end{array}
\]
\caption{For minuscule $\Spin_{4,4}$ in $G$, the centralizer $H$ and its representation $W$}\label{F4.table}
\end{table}

We omit a ``top down'' analysis reconstructing the algebraic structure on $W$, although it is natural to think of it as a symmetric composition algebra as defined in \cite[\S34]{KMRT}.  

Alternatively, the additive decomposition \eqref{f4.decomp} is familiar from the theory of structurable algebras as in \cite[p.~1869, (c)]{A:models}, which takes a tensor product $C_1 \otimes C_2$ with $C_1$ an octonion algebra and $C_2$ any composition algebra and constructs from it a Lie algebra $\g$ with the same decomposition \eqref{f4.decomp}.  For a different view, see \cite{Eld:magic1}, \cite{Eld:magic2}.  In such ways, one can reconstruct Table \ref{F4.table} ``from the ground up''.

In the non-split case, the group $H$ will be anisotropic if and only if the quadratic norm form on $W$ does not represent zero over $k$, or equivalently, when the composition algebra is a division algebra. This will occur for the split group $F_4$, the quasi-split group $\dE$, as well as certain inner forms of $E_7$ and $E_8$. In these cases, the short root spaces have dimension $1$, $2$, $4$, and $8$ respectively as in Table \ref{F4.table}.

\bibliographystyle{elsarticle-num-names}
\bibliography{skip_master}

\end{document}